\newtheorem{lemma}{Lemma}[section]
\newtheorem{theorem}{Theorem}[section]
\theoremstyle{remark}
\newtheorem{remark}{Remark}[section]
\newcommand{\C}{\mathbb{C}}
\newcommand{\N}{\mathbb{N}}
\newcommand{\Z}{\mathbb{Z}}
\newcommand{\R}{\mathbb{R}}
\numberwithin{equation}{section}
\begin{document} 
\title{On the asymptotic normality of the Legendre-Stirling numbers of the second kind}
\author{Wolfgang Gawronski
  \thanks{Department of Mathematics, University of Trier, Germany. E-mail: gawron@uni-trier.de}\\
	Lance L. Littlejohn
	\thanks{Department of Mathematics, Baylor University, TX, USA. E-mail: Lance\_Littlejohn@baylor.edu}\\
	Thorsten Neuschel
	\thanks{Department of Mathematics, KU Leuven, Belgium. E-mail: Thorsten.Neuschel@wis.kuleuven.be}}

 \date{\today}

\maketitle






\maketitle

\begin{abstract}For the Legendre-Stirling numbers of the second kind asymptotic formulae are derived in terms of a local central limit theorem. Thereby, supplements of the recently published asymptotic analysis of the Chebyshev-Stirling numbers are established. Moreover, we provide results on the asymptotic normality and unimodality for modified Legendre-Stirling numbers.
\end{abstract}

\paragraph{Keywords} Jacobi-Stirling numbers, Legendre-Stirling numbers, local central limit theorem, uniform asymptotics, asymptotic normality, unimodality.

\paragraph{Mathematics Subject Classification (2010)} 05A16, 60F05  
\section{Introduction and summary} \label{intro}
The main objects of our investigations are the Legendre-Stirling numbers, a special case of the Jacobi-Stirling numbers both of the second kind.  Following the recent literature, e.g., \cite{3}, \cite{9}, \cite{15}, we denote the latter numbers by the curly bracket symbol \({n\brace j}_\gamma\), $n,j$  being non-negative integers, and $\gamma$ is a fixed non-negative parameter. A formal definition of these numbers can be given through the triangular recurrence relation \cite{3}, \cite{15}
\begin{equation}\label{eq:1.1}{n\brace j}_\gamma = {n-1\brace j-1}_\gamma +j(j+2\gamma-1){n-1\brace j}_\gamma,\quad\quad n,j\in \mathbb{N}=\{1, 2,\ldots\},\end{equation}
\[{n\brace 0}_\gamma = \delta_{n,0},\quad\quad{0\brace j}_\gamma = \delta_{j,0},~~~~~~~n,j\in \mathbb{N}_0=\{0, 1, 2,\ldots\}.\]
We note that they were originally discovered in the left-definite spectral analysis of integral powers of the second order Jacobi differential operator
\[
\ell [y] (x) := \frac{-1}{(1-x)^\alpha (1+x)^\beta} \, \Big( (1-x)^{\alpha+1} (1+\gamma)^{\beta+1} y'(x)\Big)'
\]
where $\alpha, \beta > -1$ are constants and $x \in (-1,1)$. Indeed, the numbers $\{^n_j\}_\gamma$ occur in the explicit representation formula for the powers
\begin{equation} \label{eq:1.2}
\ell^n [y] (x) = \frac{1}{(1-x)^\alpha (1+\gamma)^\beta} \, \sum_{j=1}^n (-1)^j {n\brace j}_\gamma
\Big( (1-x)^{\alpha+j} (1+x)^{\beta+j} y^{(j)} (x)\Big)^{(j)}\, ,
\end{equation}
where $n\in\N$ and $\gamma = (\alpha + \beta + 2)/2$. For precise information in this context the reader is refered to the pertinent literature \cite{11}, \cite{12}, \cite{20}. The Legendre differential operator corresponds to the case  $\alpha = \beta = 0$, i.e., $\gamma = 1$, and hence the numbers \({n\brace j}_1\) are called Legendre-Stirling numbers of the second kind. Besides the already mentioned original field of differential equations during the past decade the Jacobi-Stirling numbers received considerable attention especially in combinatorics and graph theory, see, e.g.,
\cite{1}, \cite{2}, \cite{3}, \cite{6}, \cite{7}, \cite{9}, \cite{15}, \cite{16}, \cite{17},  \cite{21}, \cite{22}, \cite{23}. Among the \({n\brace j}_\gamma\)'s the Legendre-Stirling numbers \({n\brace j}_1\) were the first ones which have been examined in detail \cite{1}, \cite{2}, \cite{9}. The Jacobi-Stirling numbers and, in particular, the Legendre-Stirling numbers share many similar properties to those of the classical Stirling numbers of the second kind. In fact, analogous to (\ref{eq:1.2}), the Stirling numbers of the second kind are the coeffcients of integral powers of the classical Laguerre differential expression; see \cite{12} and \cite{20} for further details.

For the unique solution of the recurrence \eqref{eq:1.1} the following explicit formula is known \cite{3}, \cite{12}, \cite{15}, \cite{16}
\begin{equation} \label{eq:1.3}
{n\brace j}_\gamma =\sum_{r=0}^j (-1)^{r+j} \, 
\frac{(2r+2\gamma -1) \Gamma (r+2\gamma-1)(r(r+2\gamma-1))^n}{r! (j-r)! \Gamma (j+r+2\gamma)}\, ,
\end{equation}
where $n,j \in\N_0$ and $\gamma > 0$. As an immediate consequence we obtain the asymptotic statement
\begin{equation} \label{eq:1.4}
{n\brace j}_\gamma \sim
\frac{\Gamma (j+2\gamma-1)}{j! \Gamma (2j+2\gamma-1)} \,
\big( j(j+2\gamma-1)\big)^n, ~~ \text{as } n\to\infty,
\end{equation}
provided that $j \geq 1$ is \textit{fixed}, where throughout the symbol $\sim$ means that the ratio of both sides in 
\eqref{eq:1.4} tends to $1$ as customary. As known from the literature on many double sequences being significant in combinatorics much more interesting than the ``pointwise asymptotics'' in \eqref{eq:1.4} are asymptotic properties of \({n\brace j}_\gamma\), as $n\to\infty$, holding uniformly with respect to $j \in\Z$, where $\Z$ denotes the set of integers as usual. The only case of the Jacobi-Stirling numbers which has been treated under this aspect is that one for which  $\gamma = 1/2$. In the recent article \cite{15} for the so-called Chebyshev-Stirling numbers \({n\brace j}_{1/2}\) the authors have derived asymptotic approximations for these numbers in terms of a local central limit theorem given by (see \cite[Theorem 6.2]{15})
\begin{equation} \label{eq:1.5}
\frac{\sqrt{5b_n'} (2j)! \omega^{2n+1}}{2(2n)!} \,
{n\brace j}_{1/2} = \frac{1}{\sqrt{2\pi}}~ e^{-x^2/2}~
\left( 1 + \frac{c_n' (x^3 - 3x)}{6\sqrt{n}}\right) 
+ o\, \left( \frac{1}{\sqrt{n}}\right),
\end{equation}
as $n\to\infty$, where $x = (j-a'_n)/\sqrt{b'_n}$ and explicitly given elementary quantities $a'_n, b'_n, c'_n, \omega$. In particular the remainder term in \eqref{eq:1.5} holds uniformly with respect to $j\in\Z$. The proof for \eqref{eq:1.5}, which relies on a series of special properties of the Chebyshev-Stirling numbers does not work for the Jacobi-Stirling numbers in general (see also the remarks following Theorem 2.9 below). However, by a more elaborate analysis than in \cite{15} it turns out that asymptotics being similar to \eqref{eq:1.5} can be derived for the Legendre-Stirling numbers as well. More precisely, the main result of this paper is given by
\begin{equation} \label{eq:1.6}
\frac{\sqrt{b_n} (2j)! \omega^{2n+1}}{(2n)!} \,
{n\brace j}_1 = \frac{1}{\sqrt{2\pi}}~ e^{-x^2/2}~
+ o(1),
\end{equation}
as $n\to\infty$, where $x = (j-a_n)/\sqrt{b_n}$ and again elementary quantities $a_n, b_n,\omega$ given explicitly in Lemma 4.1. As above the remainder term holds uniformly with respect to $j\in\Z$. An immediate consequence of \eqref{eq:1.6} is that the numbers $(2j)! {n\brace j}_1$ are asymptotically normal (Theorem 4.3).

A short discussion of approximations of the kind \eqref{eq:1.5}, \eqref{eq:1.6} is given in Sections 3 and 4 below. In establishing \eqref{eq:1.6} the main tools are taken from the central limit theory of probability (see Section 3) and from the analysis of the basic case ${n\brace j}_{1/2}$ treated in \cite{15} which has to be extended by some lemmata for the Legendre-Stirling numbers ${n\brace j}_1$ presented in the subsequent Section 2. A related approach to asymptotics occasionally has been applied to various sequences of special numbers in the literature, see, e.g.,
\cite{4}, \cite{5}, \cite{8}, \cite{13}, \cite{14}, \cite{15}, \cite{18}, \cite{19}, \cite{27}, \cite{28}.

\section{Auxiliary results} \label{auxi}

In this section we collect and prove some analytic facts which are basic for our main result in Section 4. To begin with, we note the special case of the triangular recurrence relation \eqref{eq:1.1} for the Legendre-Stirling case given by
\begin{equation}\label{eq:2.1}{n\brace j}_1 = {n-1\brace j-1}_1 +j(j+1){n-1\brace j}_1 \quad\quad\quad n,j\in \mathbb{N},\end{equation}
\[{n\brace 0}_\gamma = \delta_{n,0},\quad\quad{0\brace j}_\gamma = \delta_{j,0},~~~~~~~n,j\in \mathbb{N}_0,\]
and formula \eqref{eq:1.3} reduces to (see [2,(1.3)])
\begin{equation} \label{eq:2.2}
{n\brace j}_1  = \sum_{r=0}^j (-1)^{r+j} \, \frac{(2r+1)(r(r+1))^n}{(j-r)! (j+r+1)!} \, .
\end{equation}
Next, from \cite[(1.3)]{2} we take the representation
\begin{equation} \label{eq:2.3}
{n\brace j}_1 = \frac{1}{(2j)!} \, \sum_{\nu=0}^{2j} (-1)^\nu {2j \choose \nu}
\big( (j-\nu)(j+1-\nu)\big)^n\, ,
\end{equation}
$n,j\in\N_0$, and for the Chebyshev-Stirling numbers from \cite[Lemma 3.2]{15} we get
\begin{equation} \label{eq:2.4}
{n\brace j}_{1/2} = \frac{1}{(2j)!} \, \sum_{r=0}^{2j} (-1)^r {2j \choose r} (j-r)^{2n}\, ,
\end{equation}
$n,j \in\N_0$. Now, these representations imply the following connection formulae relating the Legendre-Stirling numbers
with the Chebyshev-Stirling numbers given by
\begin{lemma} \label{lem2.1}
If $k,j\in \N_0$, then we have
\begin{align} 
\label{eq:2.5} 
{2k+1\brace j}_1 & = \sum_{\mu=0}^k \, {2k+1 \choose 2\mu+1} \,
{k+\mu+1\brace j}_{1/2}\, , \\
\label{eq:2.6} {2k\brace j}_1 & = \sum_{\mu=0}^k \, {2k \choose 2\mu} \, 
{k+\mu\brace j}_{1/2}\, .
\end{align}
\end{lemma}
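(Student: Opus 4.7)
My plan is to derive both identities in parallel by starting from the explicit formula (2.3) for ${n\brace j}_1$ and massaging the summand until the corresponding explicit formula (2.4) for ${m\brace j}_{1/2}$ emerges on the right-hand side. The setup is the identity
$$(j-\nu)(j+1-\nu) \;=\; y(y+1),\qquad y := j-\nu,$$
which together with the binomial theorem gives
$$[(j-\nu)(j+1-\nu)]^n \;=\; y^n(y+1)^n \;=\; \sum_{\ell=0}^n \binom{n}{\ell}(j-\nu)^{n+\ell}.$$
Substituting this into (2.3) and interchanging summations produces
$$(2j)!\,{n\brace j}_1 \;=\; \sum_{\ell=0}^{n}\binom{n}{\ell}\,S_{n+\ell},\qquad S_m := \sum_{\nu=0}^{2j}(-1)^\nu\binom{2j}{\nu}(j-\nu)^m,$$
and formula (2.4) immediately identifies $S_{2m} = (2j)!\,{m\brace j}_{1/2}$.

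The decisive ingredient is a parity filter for $S_m$. Applying the reflection $\nu\mapsto 2j-\nu$, using $(-1)^{2j}=1$ and $\binom{2j}{2j-\nu}=\binom{2j}{\nu}$, one gets $S_m=(-1)^m S_m$, so $S_m=0$ for odd $m$. Therefore the only surviving terms are those with $n+\ell$ even, i.e.\ $\ell$ even if $n=2k$ and $\ell$ odd if $n=2k+1$. In the even case, writing $\ell=2\mu$ with $\mu\in\{0,\dots,k\}$ makes $(n+\ell)/2=k+\mu$, and $S_{n+\ell}=(2j)!\,{k+\mu\brace j}_{1/2}$ turns the reduced sum into (2.6). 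In the odd case, writing $\ell=2\mu+1$ with $\mu\in\{0,\dots,k\}$ makes $(n+\ell)/2=k+\mu+1$, and similarly yields (2.5).

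I do not anticipate any serious obstacle; the whole argument is a short manipulation built on two standard ingredients (the binomial expansion of $[y(y+1)]^n$ and the reflection symmetry of the inner alternating sum). If anything, the one point requiring care is tracking the parity of $n+\ell$ correctly and matching the resulting Chebyshev-Stirling index $(n+\ell)/2$ with the claimed indices $k+\mu$ and $k+\mu+1$ in the two cases, but this is routine bookkeeping rather than a substantive difficulty.
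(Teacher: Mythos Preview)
Your proposal is correct and follows essentially the same route as the paper: expand $[(j-\nu)(j+1-\nu)]^n$ via the binomial theorem to reach the paper's equation (2.7), use the reflection $\nu\mapsto 2j-\nu$ to kill the terms with $n+\ell$ odd, and then identify the surviving inner sums with ${(n+\ell)/2\brace j}_{1/2}$ via (2.4). The only difference is cosmetic---you treat both parities of $n$ in a single framework, while the paper writes the two cases out separately at the end.
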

\begin{proof}
Using  \eqref{eq:2.3} we obtain
\begin{equation} \label{eq:2.7}
{n\brace j}_1 = \sum_{\mu=0}^n \, {n \choose \mu} \, \frac{1}{(2j)!} \, \sum_{\nu=0}^{2j} (-1)^\nu \,
{2j \choose \nu} \, (j-\nu)^{n+\mu}\, .
\end{equation}
We observe that the inner sum is zero provided that $n + \mu = 2\ell + 1$ is odd; indeed we have
\[
\sum_{\nu=0}^{2j} (-1)^\nu {2j \choose \nu}\, (j-\nu)^{2\ell+1} =
\sum_{\nu=0}^{j-1} \ldots + \sum_{\nu=j+1}^{2j} \ldots = 0
\]
by making the index shift $\nu \mapsto 2j-\nu$ in the second sum. Thus, combining \eqref{eq:2.4} and \eqref{eq:2.7} we get
\begin{align*}
{2k+1\brace j}_1
 =& \sum_{\mu=0}^k\, {2k+1 \choose 2\mu+1} \, \frac{1}{(2j)!} \, \sum_{\nu=0}^{2j} (-1)^\nu \, {2j\choose\nu}\, (j-\nu)^{2k+1+2\mu+1}\\ 
 =& \sum_{\mu=0}^k \, {2k+1\choose 2\mu+1}\, {k+\mu+1\brace j}_{1/2},
\end{align*}
which establishes \eqref{eq:2.5} and
\begin{align*}
{2k\brace j}_1
 =& \sum_{\mu=0}^k\, {2k \choose 2\mu} \, \frac{1}{(2j)!} \, \sum_{\nu=0}^{2j} (-1)^\nu \, {2j\choose\nu}\, (j-\nu)^{2k+2\mu}\\ 
 =& \sum_{\mu=0}^k \, {2k\choose 2\mu}\, {k+\mu\brace j}_{1/2},
\end{align*}
which proves \eqref{eq:2.6}.
\end{proof}
Next, following \cite[(15)]{15} we consider the horizontal generating function of the modified Chebyshev-Stirling numbers
$(2j)! {n\brace j}_{1/2}$ given by
\begin{equation} \label{eq:2.8}
L_n (s) := \sum_{j=0}^n (2j)!
{n\brace j}_{1/2} s^j, \qquad s\in\C,
\end{equation}
and the corresponding function for the modified Legendre-Stirling numbers $(2j)!  {n\brace j}_1$ defined through
\begin{equation} \label{eq:2.9}
M_n (s) := \sum_{j=0}^n (2j)! 
{n\brace j}_1 s^j, \qquad s\in\C.
\end{equation}
As an immediate consequence of Lemma \ref{lem2.1} we conclude in
\begin{lemma} \label{lem2.2}
If $k\in\N_0, s\in \C$, then we have
\begin{align} 
\label{eq:2.10} M_{2k+1} (s) & = \sum_{\mu=0}^k {2k+1\choose 2\mu+1} \, L_{k+\mu+1} (s), \\
\label{eq:2.11} M_{2k} (s)   & = \sum_{\mu=0}^k {2k \choose 2\mu} \, L_{k+\mu} (s).
\end{align}
\end{lemma}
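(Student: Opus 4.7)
The plan is to obtain both identities by substituting the connection formulas from Lemma~\ref{lem2.1} directly into the defining series \eqref{eq:2.9} for $M_n(s)$, interchanging the two finite sums, and recognizing the resulting inner series as the horizontal generating function $L_m(s)$ defined in \eqref{eq:2.8}.

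For the odd case \eqref{eq:2.10}, inserting \eqref{eq:2.5} into \eqref{eq:2.9} with $n=2k+1$ and swapping the order of the two finite summations gives
\[
M_{2k+1}(s) \;=\; \sum_{\mu=0}^{k} \binom{2k+1}{2\mu+1} \sum_{j=0}^{2k+1} (2j)!\, {k+\mu+1 \brace j}_{1/2}\, s^j.
\]
Since $\mu\le k$ forces $k+\mu+1\le 2k+1$, and since the triangular recurrence \eqref{eq:1.1} with its initial conditions yields ${m \brace j}_{1/2}=0$ whenever $j>m$ (by a one-line induction on $m$), the terms with $j>k+\mu+1$ vanish. Therefore the inner $j$-sum coincides with $L_{k+\mu+1}(s)$ by \eqref{eq:2.8}, which establishes \eqref{eq:2.10}.

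The even case \eqref{eq:2.11} follows from exactly the same template with \eqref{eq:2.6} in place of \eqref{eq:2.5}: here $k+\mu\le 2k$ guarantees that the $j$-range $0\le j\le 2k$ is wide enough to capture every nonzero term of ${k+\mu \brace j}_{1/2}$, so the inner sum collapses to $L_{k+\mu}(s)$.

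There is essentially no obstacle beyond the elementary vanishing property ${m \brace j}_\gamma=0$ for $j>m$ noted above; the genuine combinatorial work is already carried out in Lemma~\ref{lem2.1}, and the passage to generating functions is a routine Fubini-type rearrangement of finite sums, which is presumably why the authors announce Lemma~\ref{lem2.2} as an immediate consequence.
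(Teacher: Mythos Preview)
Your proposal is correct and matches the paper's intent exactly: the authors state Lemma~\ref{lem2.2} without a separate proof, announcing it simply as ``an immediate consequence of Lemma~\ref{lem2.1}'', and what you wrote is precisely that immediate deduction---substitute \eqref{eq:2.5}/\eqref{eq:2.6} into the definition \eqref{eq:2.9}, swap the finite sums, and use ${m\brace j}_{1/2}=0$ for $j>m$ to recognize \eqref{eq:2.8}.
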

In \cite[Lemma 3.3, iii)]{15} for the polynomials $L_n$ the following representation by means of an Eisenstein series turned out to be a very useful analytic tool (see, e.g., \cite{10}, \cite[p. 234]{26})
\begin{equation} \label{eq:2.12}
L_n \left( \frac{1}{2(\cosh w - 1)}\right) = (2n)! \, \frac{2(\cosh w - 1)}{\sinh w} \,
\sum_{m=-\infty}^\infty \, \frac{1}{(w + 2\pi im)^{2n+1}}\, ,
\end{equation}
where $n\in\N$ and $w \in \C\setminus \{2\pi im\, |\,m\in\Z\}$. In order to derive a similar formula for $M_n$ we introduce the Laplace integral
\begin{align} \label{eq:2.13}
I_{r,n} (z) := \int\limits_0^\infty e^{-\xi z} \xi^r \Big( (\xi+1)^n + (\xi-1)^n\Big)\, d\xi\, ,
\end{align}
where $n,r \in \N_0$, $\textrm{Re}\, z > 0$, and prove
\begin{lemma} \label{lem2.3}
If $n\in\N$, then for $w\in\C$ with ${\textrm Re}\, w > 0$ we have
\begin{equation} \label{eq:2.14}
M_n \left( \frac{1}{2(\cosh w - 1)}\right) = \frac{\cosh w - 1}{\sinh w} \, \sum_{m=-\infty}^\infty
I_{n,n} (w + 2\pi im).
\end{equation}
\end{lemma}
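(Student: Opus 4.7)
The strategy is to combine the connection formulas (2.10)--(2.11) from Lemma \ref{lem2.2} with the Eisenstein-series representation (2.12) of $L_n$, after explicitly evaluating the Laplace integral $I_{n,n}$ in closed form.

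First I would compute $I_{n,n}(z)$. Because $(\xi+1)^n+(\xi-1)^n$ retains only the monomials whose degree has the same parity as $n$, with a factor $2$, termwise integration against $e^{-\xi z}\xi^n\,d\xi$ (using $\int_0^\infty e^{-\xi z}\xi^m\,d\xi = m!/z^{m+1}$, valid for $\operatorname{Re} z>0$) and the reindexing $j=k+\mu+1$ (respectively $j=k+\mu$) yields
\begin{align*}
I_{2k+1,2k+1}(z) &= 2\sum_{\mu=0}^{k}\binom{2k+1}{2\mu+1}\,\frac{(2(k+\mu+1))!}{z^{2(k+\mu+1)+1}}, \\
I_{2k,2k}(z) &= 2\sum_{\mu=0}^{k}\binom{2k}{2\mu}\,\frac{(2(k+\mu))!}{z^{2(k+\mu)+1}}.
\end{align*}
The crucial point is that the exponents of $z$ are exactly the Eisenstein exponents $2j+1$ in (2.12), and the binomial coefficients are precisely those appearing in Lemma \ref{lem2.2}.

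Next I would sum over $m\in\Z$, swapping the finite $\mu$-sum with the $m$-series (absolutely convergent since the exponent is at least $3$). Inverting (2.12) to write
\[
\sum_{m=-\infty}^{\infty}\frac{1}{(w+2\pi i m)^{2j+1}}
= \frac{\sinh w}{2(\cosh w-1)\,(2j)!}\,L_j\!\left(\tfrac{1}{2(\cosh w-1)}\right),
\]
and substituting into the right-hand side of (2.14) with $z=w+2\pi im$, the factorial factors $(2j)!$ and the prefactor $(\cosh w-1)/\sinh w$ cancel cleanly. What remains is $\sum_{\mu=0}^{k}\binom{2k+1}{2\mu+1}L_{k+\mu+1}(\cdot)$ in the odd case and $\sum_{\mu=0}^{k}\binom{2k}{2\mu}L_{k+\mu}(\cdot)$ in the even case, which by (2.10)--(2.11) are $M_{2k+1}$ and $M_{2k}$ respectively.

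The main obstacle is the bookkeeping in the closed-form evaluation of $I_{n,n}$: one must verify that the parity-filtered binomials produced by $(\xi{+}1)^n\pm(\xi{-}1)^n$ exactly match those in Lemma \ref{lem2.2} under the right reindexing, and that the resulting exponents of $z$ line up with the Eisenstein exponents in (2.12). The analytic steps (convergence of the Laplace integral for $\operatorname{Re} w>0$, absolute convergence of the $m$-series, legitimacy of the sum interchange) are all routine.
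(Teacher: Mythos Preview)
Your proposal is correct and uses precisely the same ingredients as the paper's proof: Lemma~\ref{lem2.2}, the Eisenstein representation \eqref{eq:2.12}, the parity-filtered binomial identities \eqref{eq:2.15}--\eqref{eq:2.16}, and the Gamma-integral formula $m!/z^{m+1}=\int_0^\infty e^{-\xi z}\xi^m\,d\xi$. The only difference is the direction in which the chain of equalities is traversed: the paper starts from $M_n$, applies \eqref{eq:2.10}--\eqref{eq:2.11} and \eqref{eq:2.12}, rewrites the factorials as integrals, collapses the binomial sum via \eqref{eq:2.15}--\eqref{eq:2.16}, and then makes the substitution $\xi=t/(w+2\pi im)$ to recognise $I_{n,n}$; you instead expand $I_{n,n}(z)$ termwise into the closed form $2\sum_\mu\binom{n}{\cdots}\,(\cdots)!/z^{\cdots}$, sum over $m$, invoke \eqref{eq:2.12} in reverse, and finish with Lemma~\ref{lem2.2}. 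Your ordering has the minor advantage that the closed form for $I_{n,n}(z)$ is valid immediately for all $\operatorname{Re}z>0$, so the complex change of variable the paper performs (deforming the ray $t\in(0,\infty)$ back to the real axis after the substitution $\xi=t/(w+2\pi im)$) is never needed.
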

\begin{proof}
We recall the well-known binomial identities
\begin{align}
\label{eq:2.15} \sum_{\mu=0}^k {2k+1 \choose 2\mu+1}\, x^{2\mu+1}
& = \frac{1}{2}\, \Big( (x+1)^{2k+1} + (x-1)^{2k+1}\Big), \\ 
\label{eq:2.16} \sum_{\mu=0}^k {2k \choose 2\mu} \, x^{2\mu} 
& = \frac{1}{2}\, \Big( (x+1)^{2k} + (x-1)^{2k}\Big),
\end{align} 
where $k\in\N_0$ and $x\in\C$. First, we consider $n = 2k+1$ to be odd. Then, using \eqref{eq:2.10}, \eqref{eq:2.12}, \eqref{eq:2.13}, and \eqref{eq:2.15}, we obtain
\begin{eqnarray*}
\lefteqn{M_{2k+1} \left( \frac{1}{2(\cosh w - 1)}\right)} \\ \\
&& = \sum_{\mu=0}^k {2k+1 \choose 2\mu+1} \, \big( 2(k+\mu+1)\big)! ~\, \frac{2(\cosh w - 1)}{\sinh w} \,
     \sum_{m=-\infty}^\infty \, \frac{1}{(w+2\pi im)^{2(k+\mu+1)+1}} \\ \\
&& = \frac{2(\cosh w - 1)}{\sinh w} \, \sum_{m=-\infty}^\infty \frac{1}{(w+2\pi im)^{2k+2}}
     \int\limits_0^\infty e^{-t} t^{2k+1} \sum_{\mu=0}^k \, {2k+1 \choose 2\mu+1}\,
     \left( \frac{t}{w + 2\pi im}\right)^{2\mu+1} dt \\
&& = \frac{\cosh w - 1}{\sinh w} \, \sum_{m=-\infty}^\infty \int\limits_0^\infty e^{-\xi (w+2\pi im)} \xi^{2k+1}
     \Big( (\xi+1)^{2k+1} + (\xi-1)^{2k+1}\Big)\, d\xi \\ \\
&& = \frac{\cosh w - 1}{\sinh w} \, \sum_{m=-\infty}^\infty I_{2k+1,2k+1} (w + 2\pi im).          
\end{eqnarray*}
This establishes \eqref{eq:2.14} for odd $n = 2k+1$. If $n = 2k$ is even, then we combine \eqref{eq:2.11}, \eqref{eq:2.12}, 
\eqref{eq:2.13} and \eqref{eq:2.16}. We omit the calculations, since they are very similar to those above.
\end{proof}
From the definition in \eqref{eq:2.13} instantly we get a formula for the derivatives in
\begin{lemma} \label{lem2.4}
If $n,r,\nu\in\N_0$, then for $\textrm{Re}\,z > 0$ we have
\begin{equation} \label{eq:2.17}
I_{r,n}^{(\nu)} (z) = (-1)^\nu\, I_{r+\nu,n} (z)\, .
\end{equation}
\end{lemma}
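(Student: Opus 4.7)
The plan is to verify \eqref{eq:2.17} by differentiation under the integral sign applied directly to the defining Laplace integral \eqref{eq:2.13}. Each derivative of $e^{-\xi z}$ with respect to $z$ produces a factor $-\xi$, so after $\nu$ differentiations the integrand picks up $(-1)^\nu\xi^\nu$, turning the weight $\xi^r$ into $(-1)^\nu\xi^{r+\nu}$; comparison with the definition of $I_{r+\nu,n}(z)$ then yields the claimed identity at once.

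To carry this out rigorously, I would first fix $\sigma>0$ and restrict attention to the closed half-plane $\{z\in\C:\textrm{Re}\,z\geq\sigma\}$. On this set both the integrand in \eqref{eq:2.13} and every partial derivative in $z$ up to order $\nu$ are dominated by $2\,e^{-\sigma\xi}\xi^{r+\nu}(\xi+1)^n$, which is integrable on $(0,\infty)$. This uniform majorant simultaneously legitimizes differentiation under the integral sign (via the dominated convergence theorem applied to the difference quotients) and shows that $I_{r,n}$ extends to a holomorphic function of $z$ in the open right half-plane. Once differentiation under the integral sign is justified, a one-line induction on $\nu$ completes the proof: the base case $\nu=0$ is the definition, and the inductive step computes
\[
I_{r,n}^{(\nu+1)}(z)=\frac{d}{dz}\bigl((-1)^\nu I_{r+\nu,n}(z)\bigr)=(-1)^\nu\int_0^\infty(-\xi)\,e^{-\xi z}\xi^{r+\nu}\bigl((\xi+1)^n+(\xi-1)^n\bigr)d\xi=(-1)^{\nu+1}I_{r+\nu+1,n}(z).
\]

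The only potential obstacle is the justification of the interchange of differentiation and integration, but given the Laplace factor $e^{-\xi z}$ with $\textrm{Re}\,z>0$ together with the merely algebraic growth of the remaining factors $\xi^r$ and $(\xi\pm 1)^n$, this is entirely standard. Consequently, the lemma amounts to little more than a direct computation and I would expect no genuinely delicate point to arise.
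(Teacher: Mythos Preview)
Your argument is correct and matches the paper's approach exactly: the paper simply remarks that \eqref{eq:2.17} follows ``instantly'' from the definition \eqref{eq:2.13}, which is precisely the differentiation under the integral sign that you carry out in detail. Your added justification of the interchange is routine but appropriate, and nothing further is needed.
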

Next, for our modified Eisenstein series we derive a useful error estimate (c.f. Lemma 5.1 in \cite{15}).
\begin{lemma} \label{lem2.5}
If $n,r\in\N_0, r\geq 2$, and $w > 0$, then we have
\begin{equation} \label{eq:2.18}
\sum_{m=-\infty}^\infty I_{r,n} (w+2\pi im) = I_{r,n} (w)
\left( 1 + {\mathcal{O}} \left( \left( \frac{w}{4\pi}\right)^{(r+1)/2}\right)\right)\, ,
\end{equation}
the ${\mathcal{O}}$-term holding uniformly with respect to $w > 0$ and being independent of $n$ and $r$.
\end{lemma}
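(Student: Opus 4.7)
The plan is to avoid integration by parts entirely and exploit the fact that $I_{r,n}(z)$ is, up to the overall factor $1/z^{r+1}$, a polynomial in $1/z$ with non-negative coefficients. Specifically, using the binomial expansion $(\xi+1)^n + (\xi-1)^n = 2\sum_{k=0}^{\lfloor n/2\rfloor}\binom{n}{2k}\xi^{n-2k}$ and integrating term by term in \eqref{eq:2.13} (permissible since $\textrm{Re}\,z > 0$), one arrives at
\begin{equation*}
I_{r,n}(z) = \frac{2}{z^{r+1}}\, Q_{n,r}(1/z), \qquad Q_{n,r}(y) := \sum_{k=0}^{\lfloor n/2\rfloor}\binom{n}{2k}(n-2k+r)!\, y^{n-2k},
\end{equation*}
where $Q_{n,r}$ has non-negative coefficients. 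This single structural fact will drive the entire argument.

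From there, for complex $y$ one has $|Q_{n,r}(y)| \leq Q_{n,r}(|y|)$, and since $Q_{n,r}$ is monotone non-decreasing on $[0,\infty)$, for $z = w+2\pi im$ (which satisfies $|z| \geq w$) one obtains the pointwise estimate
\begin{equation*}
|I_{r,n}(w+2\pi im)| \leq I_{r,n}(w) \left(\frac{w}{|w+2\pi im|}\right)^{r+1}.
\end{equation*}
Next, the AM--GM inequality $w^2 + 4\pi^2 m^2 \geq 4\pi w|m|$ gives $|w+2\pi im|\geq 2\sqrt{\pi w |m|}$, so the factor on the right is at most $(w/(4\pi|m|))^{(r+1)/2}$ whenever $m \neq 0$.

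Summing over $m \neq 0$ and pulling out the common factor $(w/(4\pi))^{(r+1)/2}$ leaves the tail $\sum_{m\neq 0}|m|^{-(r+1)/2}$. Under the hypothesis $r\geq 2$ the exponent $(r+1)/2 \geq 3/2 > 1$, so this tail is bounded by $2\zeta(3/2)$, a constant independent of both $n$ and $r$. Noting that $I_{r,n}(w) > 0$ (the integrand in \eqref{eq:2.13} is non-negative on $(0,\infty)$ by the same binomial expansion), division by $I_{r,n}(w)$ yields \eqref{eq:2.18}, uniformly in $w > 0$.

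I do not expect a real obstacle: both the positivity of the coefficients of $Q_{n,r}$ and the AM--GM step are elementary, and uniformity in $n$, $r$, and $w$ is built into the bound. The only delicate point is that the hypothesis $r\geq 2$ is not cosmetic but precisely the threshold ensuring convergence of the zeta-like tail with an absolute constant; any attempt to weaken it would force a more careful treatment of the sum over $m$ using cancellations rather than absolute values.
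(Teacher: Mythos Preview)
Your proof is correct and follows essentially the same route as the paper: both arguments establish the key pointwise bound $|I_{r,n}(w+2\pi im)| \le (w/|w+2\pi im|)^{r+1} I_{r,n}(w)$ via the non-negativity of the binomial coefficients in $(\xi+1)^n+(\xi-1)^n$ (you package this as the polynomial $Q_{n,r}$ with non-negative coefficients, the paper keeps the integral and bounds the integrand using the identities \eqref{eq:2.15}--\eqref{eq:2.16}), and then both apply AM--GM to reach the same estimate $2\zeta((r+1)/2)(w/4\pi)^{(r+1)/2}$. Your explicit term-by-term integration has the minor advantage of sidestepping the implicit complex substitution $t=\xi z$ in the paper's presentation, but the underlying mechanism is identical.
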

\begin{proof}
Writing
\begin{align*}
\sum_{m=-\infty}^\infty I_{r,n} (w+2\pi im)
& =   I_{r,n} (w) \left( 1 + \sum_{m\not= 0} \, \frac{I_{r,n} (w+2\pi im)}{I_{r,n} (w)}\right) \\*[0.3cm]
& =:  I_{r,n} (w) \big( 1 + R_{r,n} (w)\big),
\end{align*}
we have
\begin{align*}
\big| R_{r,n} (w)\big|
& = \left|\sum\limits_{m\not= 0} \left( \frac{w}{w+2\pi im}\right)^{r+1}\,
    \frac{\int\limits_0^\infty e^{-t} t^r \Big( \Big( \frac{t}{w+2\pi im} + 1\Big)^n +
     \Big( \frac{t}{w+2\pi im} - 1\Big)^n\Big)\, dt}
     {\int\limits_0^\infty e^{-t} t^r \Big( \Big( \frac{t}{w} + 1\Big)^n + \Big( \frac{t}{w} - 1\Big)^n\Big)\, dt} 
     \right|\, . 
\end{align*}
In view of the inequality (use \eqref{eq:2.15}, \eqref{eq:2.16})
\begin{eqnarray*}
\left| \int\limits_0^\infty e^{-t} t^r 
        \left( \left( \frac{t}{w+2\pi im} + 1\right)^n +
               \left( \frac{t}{w+2\pi im} + 1\right)^n\right)\, dt \right| \\
\leq \int\limits_0^\infty e^{-t} t^r \left( \left( \frac{t}{w} + 1\right)^n +
               \left( \frac{t}{w} - 1\right)^n\right)\, dt
\end{eqnarray*}
we get
\[
\big| R_{r,n} (w)\big| \leq
2 \sum_{m=1}^\infty\, \frac{w^{r+1}}{(w^2 + 4\pi^2 m^2)^{(r+1)/2}} \leq
2 \zeta\, \left( \frac{r+1}{2}\right) \left( \frac{w}{4\pi}\right)^{(r+1)/2}\, ,
\]
which implies \eqref{eq:2.18}.
\end{proof}
An important asymptotic approximation for our purpose below is provided by
\begin{lemma} \label{lem2.6}
For fixed $\nu\in\N_0$ and $z > 0$ we have
\begin{equation} \label{eq:2.19}
I_{n+\nu,n} (z) = \frac{n! (n+\nu)!}{\sqrt{\pi n}}\, \left( \frac{2}{z}\right)^{2n+\nu+1}
\left( b(z) + \frac{b_\nu (z)}{n} + {\mathcal{O}} \left( \frac{1}{n^2}\right)\right)
\end{equation}
as $n\to\infty$, where
\begin{align} 
\label{eq:2.20} b (z) & = \cosh \, \frac{z}{2}\, , \\*[0.3cm]
\label{eq:2.21} b_\nu (z) & = - \frac{1}{8} \left( \frac{z^2}{2}\, \cosh \, \frac{z}{2} + 2z \nu \sinh \, \frac{z}{2} +
            (2\nu^2 + 2\nu+1)\, \cosh \, \frac{z}{2}\right)\, .
\end{align}
\end{lemma}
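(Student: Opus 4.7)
The plan is to replace a saddle-point analysis of $I_{n+\nu,n}(z)$ (which would require handling two distinct saddles near $\xi=2n/z\pm\tfrac12$) by an exact series representation, and then expand this series coefficient by coefficient. Because only even powers of $\xi$ survive in $(\xi+1)^n+(\xi-1)^n=2\sum_{j\ge 0}\binom{n}{2j}\xi^{n-2j}$, the elementary integral $\int_0^\infty e^{-\xi z}\xi^m\,d\xi=m!/z^{m+1}$ applied termwise turns \eqref{eq:2.13} into
\[
I_{n+\nu,n}(z)=\frac{2(2n+\nu)!}{z^{2n+\nu+1}}\sum_{j=0}^{\lfloor n/2\rfloor}a_{n,j,\nu}\,\frac{(z/2)^{2j}}{(2j)!},\qquad a_{n,j,\nu}:=\prod_{i=0}^{2j-1}\frac{2(n-i)}{2n+\nu-i}.
\]

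The next step is to expand the coefficients $a_{n,j,\nu}$. Writing each factor of the product as $1-(\nu+i)/(2n+\nu-i)$ and taking logarithms yields
\[
a_{n,j,\nu}=1-\frac{j(2\nu+2j-1)}{2n}+\mathcal{O}\!\left(\frac{j^4}{n^2}\right),
\]
an error that remains summable against the super-exponentially decaying weight $(z/2)^{2j}/(2j)!$ (and the tail $j>\sqrt{n}$ is negligible since $0\le a_{n,j,\nu}\le 1$). Two differentiations of $\cosh w$ give the elementary identities $\sum_{j\ge 1}j w^{2j}/(2j)!=(w/2)\sinh w$ and $\sum_{j\ge 1}j(2j-1)w^{2j}/(2j)!=(w^2/2)\cosh w$, and inserting the expansion of $a_{n,j,\nu}$ with $w=z/2$ produces
\[
\sum_{j\ge 0}a_{n,j,\nu}\frac{(z/2)^{2j}}{(2j)!}=\cosh\tfrac{z}{2}-\frac{1}{n}\left(\frac{\nu z}{4}\sinh\tfrac{z}{2}+\frac{z^2}{16}\cosh\tfrac{z}{2}\right)+\mathcal{O}(1/n^2).
\]

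Finally, I rewrite the prefactor as $\frac{2(2n+\nu)!}{z^{2n+\nu+1}}=\frac{n!(n+\nu)!}{\sqrt{\pi n}}\bigl(\tfrac{2}{z}\bigr)^{2n+\nu+1}C_n(\nu)$, where $C_n(\nu):=\frac{(2n+\nu)!\sqrt{\pi n}}{n!(n+\nu)!\,2^{2n+\nu}}$, and must show $C_n(\nu)=1-(2\nu^2+2\nu+1)/(8n)+\mathcal{O}(1/n^2)$. Plugging the Stirling series $\log k!=(k+\tfrac12)\log k-k+\tfrac12\log(2\pi)+\tfrac{1}{12k}+\mathcal{O}(k^{-3})$ into $\log C_n(\nu)$, then expanding $\log(2n+\nu)=\log(2n)+\log(1+\nu/(2n))$ and $\log(n+\nu)=\log n+\log(1+\nu/n)$ to order $1/n$, one checks that the $\log n$, $\log 2$ and $\log(2\pi)$ contributions cancel exactly, leaving the asserted $1/n$ coefficient; the case $\nu=0$ recovers the classical $\binom{2n}{n}=\frac{4^n}{\sqrt{\pi n}}(1-\tfrac{1}{8n}+\cdots)$. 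Multiplying $C_n(\nu)$ by the expansion of the $j$-sum and collecting the $1/n$ contributions gives the stated expansion with $b_\nu(z)$ exactly as in \eqref{eq:2.21}. The main obstacle is this Stirling bookkeeping: the factorials $n!$, $(n+\nu)!$ and $(2n+\nu)!$ are all of the same order so that $C_n(\nu)=1+\mathcal{O}(1/n)$, and the coefficient $-(2\nu^2+2\nu+1)/(8n)$ emerges only after a delicate three-way cancellation in which the $(k+\tfrac12)\log k$ and $1/(12k)$ terms must be tracked simultaneously to the correct precision.
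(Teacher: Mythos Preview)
Your argument is correct and constitutes a genuinely different proof from the paper's. The paper rewrites $I_{n+\nu,n}(z)$ as a complex contour integral,
\[
I_{n+\nu,n}(z)=\frac{(-1)^n n!(n+\nu)!}{\pi i}\int \frac{\cosh t}{t^{n+1}(t+z)^{n+\nu+1}}\,dt,
\]
and then applies a classical saddle-point analysis in the sense of Olver, locating the unique relevant saddle at $t_0=-z/2$ and reading off the coefficients $a_0,a_2$ from the standard recipe. Your route is entirely real-variable and elementary: expand $(\xi+1)^n+(\xi-1)^n$ binomially, integrate term by term to get an exact finite sum, then expand both the summands $a_{n,j,\nu}$ and the factorial ratio $C_n(\nu)$ via Stirling. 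Each step checks out, including the claim $C_n(\nu)=1-(2\nu^2+2\nu+1)/(8n)+\mathcal{O}(1/n^2)$ (most cleanly verified by writing $C_n(\nu)=C_n(0)\prod_{i=1}^\nu\frac{2n+i}{2(n+i)}$ rather than by raw Stirling bookkeeping), and the tail/error control via $0\le a_{n,j,\nu}\le 1$ together with the super-exponential decay of $(z/2)^{2j}/(2j)!$.

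The trade-off is that the paper's saddle-point machinery immediately yields a complete asymptotic expansion (Remark~\ref{rem2.7}) with coefficients given by a closed recipe, whereas your method, while avoiding complex analysis altogether, would require pushing the Taylor expansions of $\log a_{n,j,\nu}$ and $\log C_n(\nu)$ to successively higher orders to obtain further terms---systematic, but increasingly laborious.
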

\begin{proof}
From \eqref{eq:2.13} we obtain
\begin{align*} 
I_{n+\nu,n} (z)
& = (-1)^{n+\nu} \left( \frac{d}{dz}\right)^{n+\nu} \int\limits_0^\infty e^{-z\xi}
\Big( (\xi+1)^n + (\xi-1)^n\Big)\, d\xi \\*[0.2cm]
 &= (-1)^{n+\nu} \left( \frac{d}{dz}\right)^{n+\nu} 
    \left\{ e^z (-1)^n \left( \frac{d}{dz}\right)^n\, \frac{e^{-z}}{z} + e^{-z} (-1)^n
    \left( \frac{d}{dz}\right)^n\, \frac{e^z}{z}\right\} \\[0.4cm]
 = (-1)^{\nu}&\left( \frac{d}{dz}\right)^{n+\nu}
    \left\{ e^z \left( \frac{d}{dz}\right)^n \, \frac{1}{2\pi i} \int \, \frac{e^{-\tau}}{\tau}\, \frac{d\tau}{\tau-z}
    + e^{-z} \left( \frac{d}{dz}\right)^n\, \frac{1}{2\pi i} \int\, \frac{e^\tau}{\tau} \, \frac{d\tau}{\tau-z}\right\} \, ,
\end{align*}
where the integration is performed on a small circle around $\tau = z$ with positive orientation. Further, we get $(\tau - z = t)$
\begin{align}  \label{eq:2.22}
I_{n+\nu,n} (z)
& = \frac{(-1)^\nu n!}{2\pi i} \, \left( \frac{d}{dz}\right)^{n+\nu} \int
    (e^{z-\tau} + e^{\tau-z}) \, \frac{d\tau}{\tau(\tau-z)^{n+1}} \nonumber \\[0.3cm] 
& = \frac{(-1)^n n! (n+\nu)!}{\pi i} \int \,\frac{\cosh t}{t^{n+1} (t+z)^{n+\nu+1}}\, dt\, ,
\end{align}
now the integration being performed on a small circle around $t = 0$ with positive orientation. Next, we will evaluate the integral by a saddle point approximation (see \cite[Chapter 4.7, $\lambda = 1, \mu = 2$]{24}). To this end, we consider the integral
\begin{align} \label{eq:2.23}
\int e^{-n p(t)} q(t)\, dt
\end{align}
with a contour of integration as above and
\begin{align} \label{eq:2.24}
p(t) := \log t (t + z), \qquad
q(t) := \frac{\cosh t}{t(t+z)^{\nu+1}} \, ,
\end{align}
the branch of the logarithm being real when $t\in (0,\infty)$. From a simple calculation we get that $t_0 = -z/2$ is the only candidate for a saddle point and $p''(t_0) = -8/z^2$. Thus, we choose the oriented circle, given by
\[
t(\varphi) = \frac{z}{2} \, e^{i\varphi}, \qquad 0 \leq \varphi \leq 2\pi, 
\]
as a new path of integration. Observe that $t(\pi) = t_0$. Since for every $\varphi \in [0,2\pi]$
\begin{align*}
\textrm{Re}\, \big(  p(t(\varphi)) - p(t_0)\big)
& = \log \left|\frac{z}{2}\, e^{i\varphi} \left( \frac{z}{2}\, e^{i\varphi} + z\right)\right| -
    \log \left( \frac{z}{2}\right)^2 \\*[0.3cm]
& = \log |e^{i\varphi} + 2| = \frac{1}{2} \, \log \big( 1 + 4 (1 + \cos\varphi)\big) \geq 0\, ,       
\end{align*}
with equality, if and only if $\varphi = \pi$, we may apply the saddle point approximation as described, e.g., in \cite[Chapters 4.6, 4.7]{24} and finally we obtain the asymptotic expansion
\begin{align} \label{eq:2.25}
\int e^{-np(t)} q(t) dt \approx \frac{2}{\sqrt{n}}\, e^{-np(t_0)} \sum_{s=0}^\infty \Gamma \left( s + \frac{1}{2}\right)\,
\frac{a_{2s}}{n^s},
\end{align}
as $n\to\infty$, with constants $a_{2s}$, in particular
\begin{align} \label{eq:2.26}
a_0 = \frac{q}{(2p'')^{1/2}}\, , \quad
a_2 = \left\{ 2q' - \frac{2p''' q'}{p''} + \left( \frac{5p'''^2}{6p''^2} - \frac{p^{(4)}}{2p''}\right) q\right\}
      \, \frac{1}{(2p'')^{3/2}}
\end{align}
with derivatives taken $t = t_0$. Since
\[
\lim_{\varphi\to\pi +} \arg \big( t(\varphi) - t(\pi)\big) = \frac{3\pi}{2}\, ,
\]
in forming the non integral powers of $p'' (t_0)$ we have to choose that branch of 
$\psi = \arg p'' (t_0) = \arg (-8/z^2)$ satisfying $|\psi + 3\pi| \leq \pi/2$ which gives $\psi = -3\pi$. Thus we have 
$\big( 2p''(t_0)\big)^{1/2} = 4i/z$ and cumbersome but straightforward computations lead to \eqref{eq:2.19} -- \eqref{eq:2.21} by combining \eqref{eq:2.22} -- \eqref{eq:2.26}, which completes the proof.
\end{proof}
\begin{remark} \label{rem2.7}
The proof of Lemma \ref{lem2.6} (see \eqref{eq:2.25}) shows that $I_{n+\nu,n} (z)$ possesses a complete asymptotic expansion as $n\to\infty$. We only computed the first two coefficients. The saddle point analysis as described in \cite{24} shows how to calculate more coefficients in \eqref{eq:2.25}, however we do not need them explicitly.
\end{remark}
Another basic fact we require below is the negativity of the zeros of $M_n$. Unfortunately we do not get this information as quick as for the polynomials $L_n$ in \cite[Lemma 3.4]{15}. We start from the equations \eqref{eq:2.1}. For the modified Legendre-Stirling numbers
\begin{equation} \label{eq:2.27}
{n\brace j}_1^\ast := (2j)! {n\brace j}_1  ,
\end{equation}
$n,j\in\N_0$, we immediately get the recurrence
\begin{equation}\label{eq:2.28}{n\brace j}_1^\ast = 2j(2j-1){n-1\brace j-1}_1^\ast +j(j+1){n-1\brace j}_1^\ast \quad\quad\quad n,j\in \mathbb{N},\end{equation}
\[{n\brace 0}_1^\ast = \delta_{n,0},\quad\quad{0\brace j}_1^\ast = \delta_{j,0},~~~~~~~n,j\in \mathbb{N}_0.\]
Now (see \eqref{eq:2.9}) for
\[
M_n (s) = \sum_{j=0}^n 
{n\brace j}_1^\ast s^j
\]
an easy calculation implies that
\begin{equation} \label{eq:2.29}
M_n (s) = s \big\{ 2M_{n-1} (s) + (10s+2) M_{n-1}' (s) + s (4s+1) M_{n-1}'' (s)\big\}, ~~ n\geq 1,
\end{equation}
with $M_0 (s) = 1$. Hence we obtain
\begin{equation} \label{eq:2.30}
M_1 (s) = 2s, \quad M_2(s) = 4s(6s+1), \quad M_3 (s) = 8s (90s^2 + 24s+1)\,.
\end{equation}
\begin{theorem} \label{theo2.8}
For every $n \geq 1$ all zeros of $M_n$ are real, simple, and they are located in the interval
$\big( - \frac{1}{4},0\big]$.
\end{theorem}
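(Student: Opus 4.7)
The plan is to prove the theorem by induction on $n$ via the factorization
$$M_n(s) = s\, N_{n-1}'(s),\qquad N_{n-1}(s) := s(4s+1)M_{n-1}'(s) + (2s+1)M_{n-1}(s),$$
which follows from \eqref{eq:2.29} after observing that the bracketed operator equals $\frac{d}{ds}\bigl[s(4s+1)f'(s) + (2s+1)f(s)\bigr]$. The case $n=1$ is trivial from \eqref{eq:2.30}. I assume inductively that the zeros of $M_{n-1}$ are $r_0:=0 > r_1 > \cdots > r_{n-2} > -1/4$, all simple.

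First I locate the $n$ zeros of $N_{n-1}$. Since $r_k(4r_k+1)<0$ for $k\geq 1$ and since the simple-interlacing relation forces $\operatorname{sgn}(M_{n-1}'(r_k))=(-1)^k$, one gets $\operatorname{sgn}(N_{n-1}(r_k))=(-1)^{k+1}$. Combining this with $N_{n-1}(0)=0$ and $N_{n-1}'(0)=2M_{n-1}'(0)=2^n>0$ (so $N_{n-1}<0$ just to the left of $0$) exhibits, by sign changes, one simple zero of $N_{n-1}$ in each of the $n-2$ intervals $(r_{k+1},r_k)$ for $k=0,1,\ldots,n-3$, together with the zero at $s=0$; in total, $n-1$ simple zeros in $[-1/4,0]$. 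Since $N_{n-1}$ has positive leading coefficient and degree $n$, while $N_{n-1}(-1/4)=\tfrac{1}{2}M_{n-1}(-1/4)$ has sign $(-1)^{n-1}$ (different from the sign $(-1)^n$ at $-\infty$), one further simple real zero lies in $(-\infty,-1/4)$, accounting for all $n$ zeros of $N_{n-1}$.

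Rolle's theorem applied between the $n$ consecutive simple zeros of $N_{n-1}$ gives $n-1$ simple real zeros of $N_{n-1}'$. All but the leftmost lie in intervals already contained in $(-1/4,0)$; the leftmost Rolle zero falls in $(\alpha_0,\alpha_1)$ where $\alpha_0<-1/4<\alpha_1$ denote the two leftmost zeros of $N_{n-1}$. Confining this Rolle zero to $(-1/4,\alpha_1)$ is the crucial step. To do so I evaluate \eqref{eq:2.29} at $s=-1/4$, where the factor $s(4s+1)$ multiplying $M_{n-1}''$ vanishes, giving
$$M_n(-1/4) = -\tfrac{1}{2} M_{n-1}(-1/4) + \tfrac{1}{8} M_{n-1}'(-1/4).$$
From $M_{n-1}(s)=c_{n-1}\,s\prod_{k=1}^{n-2}(s-r_k)$ with $c_{n-1}>0$ and $-1/4 < r_k < 0$, one reads off $\operatorname{sgn}(M_{n-1}(-1/4))=(-1)^{n-1}$ and $\operatorname{sgn}(M_{n-1}'(-1/4))=(-1)^n$, so both summands on the right have the common sign $(-1)^n$; hence $M_n(-1/4)\neq 0$ with sign $(-1)^n$, and consequently $\operatorname{sgn}(N_{n-1}'(-1/4))=\operatorname{sgn}(-4M_n(-1/4))=(-1)^{n-1}$. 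Since $\operatorname{sgn}(N_{n-1}'(\alpha_0))=(-1)^{n-1}$ and $\operatorname{sgn}(N_{n-1}'(\alpha_1))=(-1)^n$, the Rolle zero in $(\alpha_0,\alpha_1)$ actually lies in $(-1/4,\alpha_1)\subset(-1/4,0)$.

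Therefore $N_{n-1}'$ has $n-1$ simple real zeros, all in $(-1/4,0)$, and so $M_n(s) = s\,N_{n-1}'(s)$ has $n$ simple real zeros in $(-1/4,0]$ with $0$ among them, completing the induction. The main obstacle is precisely the extra zero of $N_{n-1}$ lying in $(-\infty,-1/4)$: Rolle's theorem on its own cannot prevent its Rolle partner from slipping past $-1/4$, and it is the degeneration of the recurrence at $s=-1/4$ (where $s(4s+1)=0$) that makes $M_n(-1/4)$ explicitly computable from $M_{n-1}(-1/4)$ and $M_{n-1}'(-1/4)$, thereby pinning that Rolle zero to the correct side of $-1/4$.
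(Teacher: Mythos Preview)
Your proof is correct and takes a genuinely different route from the paper's.

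The paper proceeds by evaluating $M_n$ at the \emph{critical points} $t_{n-1,\nu}$ of $M_{n-1}$ (where $M_{n-1}'=0$), at which the recurrence \eqref{eq:2.29} simplifies to $M_n(t)=t\{2M_{n-1}(t)+t(4t+1)M_{n-1}''(t)\}$; sign alternation of $M_n$ at these points then produces $n-3$ interior zeros, and the two remaining zeros are captured near $0$ and near $-\tfrac14$ by direct sign computations (the latter via the same degeneration $s(4s+1)=0$ that you exploit). By contrast, you first recognize the bracketed second-order operator in \eqref{eq:2.29} as an exact derivative, $M_n(s)=s\,N_{n-1}'(s)$ with $N_{n-1}(s)=s(4s+1)M_{n-1}'(s)+(2s+1)M_{n-1}(s)$; you then locate all $n$ simple real zeros of $N_{n-1}$ (one of which necessarily escapes to $(-\infty,-\tfrac14)$) and invoke Rolle's theorem. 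The crucial point---that the leftmost Rolle zero stays to the right of $-\tfrac14$---you settle exactly as the paper does, by computing $\operatorname{sgn} M_n(-\tfrac14)=(-1)^n$ from the simplified recurrence at $s=-\tfrac14$.

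What each approach buys: the paper's argument is a one-step sign-tracking of $M_n$ directly and is slightly shorter. Your factorization $M_n=s\,N_{n-1}'$ is structurally cleaner---it exposes why the degree rises by exactly one and why $s=0$ is always a zero---and replaces the ad hoc sign analysis at critical points by a transparent Rolle step, at the cost of the auxiliary analysis of $N_{n-1}$ (including its stray zero in $(-\infty,-\tfrac14)$). Both proofs hinge on the same degeneration at $s=-\tfrac14$.
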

\begin{proof}
We proceed by induction with respect to $n$ (c.f. \cite[Theorem 5.7]{2}). To begin with, by \eqref{eq:2.30}, we note that the theorem is true for $n = 1,2,3$. Next, we assume that the assertion holds for $M_{n-1}$, if $n \geq 4$. Thus, the zeros $s_{n-1,\nu}$ of $M_{n-1}$ $(M_{n-1} (0) = 0$, by \eqref{eq:2.29}) satisfy 
\begin{equation} \label{eq:2.31}
- \frac{1}{4} < s_{n-1,n-1} < \ldots < s_{n-1,2} < s_{n-1,1} = 0\, .
\end{equation}
Consequently, $M_{n-1}$ has $n-2$ relative extreme points, $t_{n-1,\nu}$ say, $\nu = 1,\ldots,n-2$, such that
\begin{equation} \label{eq:2.32}
- \frac{1}{4} < t_{n-1,n-2} < t_{n-1,n-3} < \ldots < t_{n-1,1} < 0
\end{equation}
and
\begin{equation} \label{eq:2.33}
\textrm{sign}\, M_{n-1} (t_{n-1,\nu}) = (-1)^\nu, \qquad \textrm{sign}\, M_{n-1}'' (t_{n-1,\nu}) = (-1)^{\nu+1}\, ,
\end{equation}
$\nu = 1,\ldots,n-2$ (observe that all coefficients of $M_{n-1}$ are positive). Since $M_{n-1}' (t_{n-1,\nu}) = 0$, $\nu = 1,\ldots,n-2$, from \eqref{eq:2.29} we get
\[
M_n (t_{n-1,\nu}) = t_{n-1,\nu} \Big\{ 2M_{n-1} (t_{n-1,\nu}) + t_{n-1,\nu} (4t_{n-1,\nu}+1) M_{n-1}'' (t_{n-1,\nu})\Big\}
\]
and hence $(t_{n-1,\nu} < 0, 4t_{n-1,\nu} + 1 > 0)$
\begin{equation} \label{eq:2.34}
\textrm{sign}\,M_n (t_{n-1,\nu}) = (-1)^{\nu+1}, \quad \nu = 1,\ldots,n-2\, .
\end{equation}
Thus, there exist $n - 3$ zeros, $s_{n,\nu}$ say, of $M_n$, $\nu = 3,\ldots,n-1$, such that
\[
- \frac{1}{4} < t_{n-1, n-2} < s_{n,n-1} < t_{n-1,n-3} < s_{n,n-2} < \ldots <
t_{n-1,2} < s_{n,3} < t_{n-1,1} < 0.
\]
Since $M_n (0) = 0, M_n' (0) = {n\brace 1}_1^\ast = 2 {n\brace 1}_1 > 0$ and
$\textrm{sign}\,M_n (t_{n-1,1}) = 1$, there is an additional zero $s_{n,2} \in (t_{n-1,1}, 0)$ of $M_n$. So far we have proved that $M_n$ has $n-1$ zeros $s_{n,\nu}$, satisfying
\[
- \frac{1}{4} < t_{n-1, n-2} < s_{n,n-1} < \ldots < t_{n-1,1} < s_{n,2} < s_{n,1} = 0.
\]
Finally, from \eqref{eq:2.34} we conclude
\begin{equation} \label{eq:2.35}
\textrm{sign}\, M_n (t_{n-1,n-2}) = (-1)^{n-1}
\end{equation}
and further from \eqref{eq:2.29} we infer
\begin{equation} \label{eq:2.36}
\textrm{sign}\,M_n \left( - \frac{1}{4}\right) = \textrm{sign}\, \left( - \frac{1}{4}\right)
\textrm{sign}\, \left\{ 2M_{n-1} \left( - \frac{1}{4}\right) - \frac{1}{2} \, M_{n-1}' 
\left( - \frac{1}{4}\right) \right\} = (-1)^n\, ,
\end{equation}
which is true in view of \eqref{eq:2.31}, \eqref{eq:2.32} and because $M_{n-1},$ $M_{n-1}'$ have non-negative coefficients only. Concluding, \eqref{eq:2.35} and \eqref{eq:2.36} ensure that $M_n$ has an additional zero 
$s_{n,n} \in \big( - \frac{1}{4}, t_{n-1,n-2}\big)$. Summarizing, Theorem \ref{theo2.8} is proved.
\end{proof}
Using a well-known criterion for the unimodality of a sequence of real numbers, for example \cite[Theorem B, p. 270]{8}, from Theorem \ref{theo2.8} we immediately obtain a supplement of the unimodality property of the Legendre-Stirling numbers \cite[Theorem 5.9]{2}.
\begin{theorem} \label{theo2.9}
If $n\geq 3$, the numbers $(2j)!{n\brace j}_1$, $0\leq j \leq n$, are unimodal with either a peak or a plateau of two points.
\end{theorem}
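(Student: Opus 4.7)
The plan is to deduce Theorem \ref{theo2.9} from Theorem \ref{theo2.8} by invoking the classical link between real-rootedness of a generating polynomial and log-concavity (hence unimodality) of its coefficient sequence.

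First I would check that the sequence $a_j := (2j)!{n\brace j}_1$ is non-negative and, for $1 \le j \le n$, strictly positive. Non-negativity is immediate by induction on $n$ from the recurrence (\ref{eq:2.28}), since $2j(2j-1)$ and $j(j+1)$ are non-negative and the initial data are non-negative. For strict positivity on $1 \le j \le n$, I would use Theorem \ref{theo2.8}: the zeros of $M_n$ are simple, real, and lie in $(-1/4,0]$ with exactly one zero at the origin, so $M_n(s)/s$ factors as a constant multiple of $\prod_{k=2}^n (s - s_{n,k})$ with each $s_{n,k}\in(-1/4,0)$; expanding this product yields a polynomial in $s$ all of whose coefficients are strictly positive.

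Next I would apply the criterion cited in the statement (see Comtet \cite[Theorem B, p.\ 270]{8}, which is essentially Newton's inequality): if a polynomial with non-negative coefficients has only real zeros, then its coefficient sequence is log-concave, and the inequalities are strict at interior indices as soon as the corresponding zeros are simple. Applied to the polynomial $M_n(s)/s = \sum_{j=1}^n a_j s^{j-1}$, which by the above has non-negative coefficients and only real (simple) zeros by Theorem \ref{theo2.8}, this yields
\[
a_j^2 > a_{j-1}\, a_{j+1}, \qquad 2 \le j \le n-1.
\]
Since the $a_j$ are positive for $1 \le j \le n$ and $a_0 = 0 < a_1$, strict log-concavity on this range forces the sequence $(a_j)_{0 \le j \le n}$ to be unimodal with maximum attained at either a single index (peak) or two consecutive indices (plateau of two points); no longer plateau is possible, because strict log-concavity rules out three consecutive equal interior values.

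There is essentially no obstacle beyond invoking the criterion: the non-trivial work has already been carried out in Theorem \ref{theo2.8}, establishing real-rootedness. The only minor care needed is to strip off the simple root at $s=0$ (equivalently, to observe $a_0 = 0$) before applying Newton's inequalities, so that one works with a strictly positive coefficient sequence.
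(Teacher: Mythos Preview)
Your proposal is correct and follows exactly the paper's approach: the paper's entire argument is the single sentence that Theorem~\ref{theo2.9} follows immediately from Theorem~\ref{theo2.8} via the real-rootedness/unimodality criterion in \cite[Theorem B, p.~270]{8}. You have supplied the details (positivity of the coefficients, stripping the simple root at $s=0$, strict log-concavity ruling out a plateau of three or more) that the paper leaves implicit, but the route is the same.
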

For the Jacobi-Stirling numbers ${n\brace j}_\gamma$ in general the polynomials
\[
M_n^\gamma (s) = \sum_{j=0}^n (2j)!\, {n\brace j}_\gamma \, s^j
\]
satisfy a recurrence being similar to \eqref{eq:2.29}. However, e.g. $M_3^\gamma$ has non real zeros, when $\gamma$ is sufficiently large. Thus, there is no analogue of Theorem \ref{theo2.8} and the arguments below cannot be applied for general $\gamma$.

\section{Tools from probability theory}
In this section we briefly provide some notations from probability and present a general local central limit theorem which turns out to be a basic tool for our main result below. In order to keep this article self-contained we summarize essential topics from section 4 of \cite{15}, see also section 3 of \cite{14}. This probalistic point of view occasionally has been used in the literature for computing asymptotics, e.g. \cite{4}, \cite{5}, \cite{8}, \cite{13}, \cite{14}, \cite{15}, \cite{18}, \cite{27}, \cite{28}.

Motivated by the generating polynomials $M_n$ in \eqref{eq:2.9} with real and non-positive zeros only (Theorem 2.8) we consider the polynomials
\begin{equation} \label{eq:3.1}
A_n (s) := \sum_{j=0}^n \alpha_{nj} s^j = \alpha_{nn} \prod_{\nu=1}^n (s+x_{n\nu})
\end{equation}
with $x_{n\nu} \geq 0, \nu = 1,\ldots,n$, and ask for asymptotics of the coefficients $\alpha_{nj}$, as $n\to\infty$, uniformly in $j$. Now the polynomials
\begin{equation} \label{eq:3.2}
\frac{A_n (s)}{A_n(1)} = \prod_{\nu=1}^n\, (p_{n\nu} s + 1 - p_{n\nu})\, ,
\end{equation}
where $p_{n\nu} := 1/(1+x_{n\nu})$, may be regarded as the generating functions of the row sums
\[
S_n = \sum_{\nu=1}^n X_{n\nu}
\]
of a triangular array of Bernoulli random variables
\[
(X_{n\nu})_{1\leq \nu \leq n}\, .
\]
Due to the factorization in \eqref{eq:3.2}, the entries $X_{n1},\ldots,X_{nn}$ are independent with distributions given by
\[
P(X_{n\nu} = 1) = p_{n\nu}\, , \qquad
P(X_{n\nu} = 0) = 1 - p_{n\nu}
\]
with numbers $p_{n\nu} \in [0,1]$. Now the aim is an asymptotic expansion for the probabilities
\[
p_{n,j} := P(S_n = j)
\]
as $n\to\infty$, uniformly in $j\in\Z$. In order to quote a relevant limit theorem we need some further notation and conditions. Suppose that $\mu_n := E(S_n)$ and $\sigma_n^2 = Var\, (S_n)$ are the expectation and the variance of $S_n$, respectively, for which we assume that
\begin{equation} \label{eq:3.3}
\liminf_{n\to\infty}\, \frac{\sigma_n^2}{n} > 0\, .
\end{equation}
The normalized cumulants of $S_n$ are defined by
\begin{equation} \label{3.4}
\lambda_{\nu,n} := \frac{n^{(\nu-2)/2}}{\sigma_n^\nu} \, \frac{1}{i^\nu} 
\left( \frac{d}{dt}\right)^\nu \log E(e^{itS_n})\Big|_{t=0}\, ,
\end{equation}
$n,\nu\in\N, \nu\geq 2$, where $E(e^{itS_n})$ is the characteristic function of $S_n$ and $\log$ is that branch of the logarithm on the cut plane $\C\setminus(-\infty,0]$ satisfying $\log 1 = 0$. Finally, we introduce the functions
\begin{equation} \label{eq:3.5}
q_{\nu,n} (x) := \frac{1}{\sqrt{2\pi}}~ e^{-x^2/2} 
\sum_{\mu_1 + 2\mu_2 + \ldots + \nu\mu_\nu = \nu} H_{\nu+2s} (x) \prod_{m=1}^\nu \, \frac{1}{\mu_m!}\,
\left( \frac{\lambda_{m+2,n}}{(m+2)!}\right)^{\mu_m},
\end{equation}
$x\in \R, n,\nu\in\N$, where $s = \mu_1 + \ldots + \mu_\nu$, and the modified Hermite polynomials are defined by
\begin{equation} \label{eq:3.6}
H_m (x) := (-1)^m e^{x^2/2} \left( \frac{d}{dx}\right)^m e^{-x^2/2},
\end{equation}
$x\in\R, m\in\N_0$. Now we can formulate our basic auxiliary result which we take from \cite[Lemma 3.1]{14}.
\begin{lemma} \label{lem3.1}
Assuming the above notations and the condition \eqref{eq:3.3}, then for every $k \geq 2$, we have
\begin{equation} \label{eq:3.7}
\sigma_n p(n,j) = \frac{1}{\sqrt{2\pi}}~ e^{-x^2/2} + \sum_{\nu=1}^{k-2}\, \frac{q_{\nu,n} (x)}{n^{\nu/2}} +
o \left( \frac{1}{n^{(k-2)/2}}\right),
\end{equation}
as $n\to\infty$, uniformly $j \in \Z$, where $x = (j - \mu_n)/\sigma_n$.
\end{lemma}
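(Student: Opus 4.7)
Since $S_n$ is integer valued, the Fourier inversion (discrete) formula gives
\begin{equation*}
p(n,j) = \frac{1}{2\pi}\int_{-\pi}^{\pi} e^{-itj}\,\varphi_n(t)\,dt, \qquad \varphi_n(t):=E(e^{itS_n}),
\end{equation*}
and after the substitution $t=u/\sigma_n$ and writing $x=(j-\mu_n)/\sigma_n$,
\begin{equation*}
\sigma_n\,p(n,j) = \frac{1}{2\pi}\int_{-\pi\sigma_n}^{\pi\sigma_n} e^{-iux}\,\psi_n(u)\,du, \qquad \psi_n(u):=e^{-iu\mu_n/\sigma_n}\varphi_n(u/\sigma_n).
\end{equation*}
The strategy is the classical Edgeworth scheme: expand $\psi_n$ around $u=0$ up to the required order, invert the expansion term by term against $e^{-iux}$ (which turns polynomials in $iu$ times $e^{-u^2/2}$ into Hermite polynomials times $e^{-x^2/2}$ via \eqref{eq:3.6}), and control the pieces of the integral lying away from the origin.

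For the local expansion I would split the range into $|u|\le\delta_n$ and $\delta_n<|u|\le\pi\sigma_n$, with $\delta_n$ a slowly growing sequence (e.g.\ $\delta_n=n^{1/6}$). On $|u|\le\delta_n$, the cumulant definition \eqref{3.4} and condition \eqref{eq:3.3} give
\begin{equation*}
\log\psi_n(u) = -\frac{u^2}{2} + \sum_{m=1}^{k-2} \frac{(iu)^{m+2}}{(m+2)!}\,\frac{\lambda_{m+2,n}}{n^{m/2}} + O\!\left(\frac{|u|^{k+1}}{n^{(k-1)/2}}\right),
\end{equation*}
since $S_n$ is a sum of independent Bernoullis whose moments of all orders are bounded. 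Exponentiating, multiplying out the exponential of the correction terms, and grouping by powers of $n^{-1/2}$ produces
\begin{equation*}
\psi_n(u) = e^{-u^2/2}\left(1 + \sum_{\nu=1}^{k-2}\frac{P_{\nu,n}(iu)}{n^{\nu/2}}\right) + \text{error},
\end{equation*}
where $P_{\nu,n}$ is exactly the polynomial in $iu$ arising from the multinomial identity with exponents $\mu_1+2\mu_2+\dots+\nu\mu_\nu=\nu$ that appears in \eqref{eq:3.5}. Applying $\frac{1}{2\pi}\int_{-\infty}^{\infty} e^{-iux}(\cdot)\,du$, using the identity $\frac{1}{2\pi}\int e^{-iux}(iu)^m e^{-u^2/2}\,du = \frac{1}{\sqrt{2\pi}}H_m(x)e^{-x^2/2}$ and matching indices to $H_{\nu+2s}$ with $s=\mu_1+\dots+\mu_\nu$, recovers the stated functions $q_{\nu,n}(x)$. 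The contribution of the error term integrates to $o(n^{-(k-2)/2})$ uniformly in $x$ by the standard bound $|e^{-u^2/2}|$ times a power of $|u|$.

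The main technical obstacle is the tail estimate on $\delta_n<|u|\le\pi\sigma_n$, which must be shown to be $o(n^{-(k-2)/2})$ uniformly. For each fixed $\nu$, $|E(e^{itX_{n\nu}})|^2 = 1-2p_{n\nu}(1-p_{n\nu})(1-\cos t)$, hence
\begin{equation*}
|\varphi_n(t)|^2 \le \exp\!\Bigl(-2(1-\cos t)\,\textstyle\sum_\nu p_{n\nu}(1-p_{n\nu})\Bigr) = \exp\!\bigl(-2(1-\cos t)\sigma_n^2\bigr).
\end{equation*}
Together with \eqref{eq:3.3}, i.e.\ $\sigma_n^2\ge c n$ for some $c>0$ and all large $n$, this yields $|\psi_n(u)|\le \exp(-c' u^2)$ for $|u|\le \sigma_n$ and $|u|$ small enough, and the sharper bound $|\psi_n(u)|\le e^{-c'' n}$ once $|t|=|u|/\sigma_n$ is bounded away from $0$; both estimates combine to show that the tail integrals decay faster than any power of $n^{-1/2}$. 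The two pieces together deliver \eqref{eq:3.7}, with the uniformity in $j\in\Z$ coming from the fact that the $x$-dependence enters only through the bounded oscillatory factor $e^{-iux}$.
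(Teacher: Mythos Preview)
The paper does not actually prove Lemma~\ref{lem3.1}; it is quoted verbatim from \cite[Lemma~3.1]{14} (see the sentence preceding the lemma), so there is no in-paper argument to compare against. Your sketch is the standard Edgeworth-expansion proof for lattice-valued sums of independent Bernoulli variables, and it is correct in outline: Fourier inversion on $[-\pi,\pi]$, the cumulant expansion of $\log\psi_n$ (with boundedness of the $\lambda_{\nu,n}$ coming from \eqref{eq:3.3} and the fact that each Bernoulli cumulant is bounded), the multinomial regrouping that produces exactly the polynomials in \eqref{eq:3.5} with Hermite index $\nu+2s$, and the tail bound $|\varphi_n(t)|^2\le\exp\bigl(-2\sigma_n^2(1-\cos t)\bigr)$, which via $1-\cos t\ge 2t^2/\pi^2$ on $|t|\le\pi$ gives a uniform Gaussian bound $|\psi_n(u)|\le e^{-cu^2}$ on the whole range $|u|\le\pi\sigma_n$. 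This is essentially the argument one finds in \cite{14} and, more generally, in Petrov~\cite{25}.

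One small caution on the central window: with your illustrative choice $\delta_n=n^{1/6}$ the remainder $O(|u|^{k+1}/n^{(k-1)/2})$ in $\log\psi_n$ is not uniformly small on $|u|\le\delta_n$, so the exponentiation step needs the observation that the full correction $R(u)$ stays bounded there and that the resulting error is dominated by $e^{-u^2/2}|u|^{k+1}/n^{(k-1)/2}$, which integrates to $O(n^{-(k-1)/2})=o(n^{-(k-2)/2})$. Most references instead take $\delta_n=\epsilon\sigma_n$ for a small fixed $\epsilon$ and invoke the standard Edgeworth lemma directly; either variant closes the argument.
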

The reader who is not familiar with expansions of type \eqref{eq:3.7} is refered to the discussion accompanying Lemma \ref{lem3.1} in \cite{14} in general and to the comments following Theorem \ref{theo4.2} below.

\section{Central limit results for the Legendre-Stirling numbers}

In this final section we apply the preliminary results of the previous section with $A_n (s) = M_n (s)$ and
\begin{equation} \label{eq:4.1}
p(n,j) := \frac{(2j)! {n\brace j}_1}  {M_n (1)}\, ,
\end{equation}
where $M_n (s)$ is given by \eqref{eq:2.9}. This is possible in view of Theorem \ref{theo2.8}. First, we compute approximations for the expectation and the variance of the distribution in \eqref{eq:4.1}. To this end we use the notations introduced in Section 3. Looking at \eqref{eq:2.14} as in \cite{15} we introduce the number
\begin{equation} \label{eq:4.2}
\omega := 2 \log \frac{\sqrt{5} + 1}{2} = 0.9624\ldots
\end{equation}
being the unique positive solution of $2(\cosh w- 1) = 1$. Also in the sequel we will denote by $q\in (0,1)$ a constant which may be different at each occurrence. 
\begin{lemma} \label{lem4.1}
Suppose that the sequences $(a_n)$ and $(b_n)$ are given by
\begin{equation} \label{eq:4.3}
a_n = \frac{2n+1}{\sqrt{5} \omega} - \frac{1}{2}, \qquad
b_n = \left( \frac{1}{2} - \frac{\omega}{\sqrt{5}}\right) \left( \frac{2}{\omega}\right)^2 \frac{n}{5}\, ,
\end{equation}
then we have
\begin{equation} \label{eq:4.4}
\mu_n = a_n + {\mathcal{O}} \left( \frac{1}{n}\right), \qquad \sigma_n^2 = b_n + {\mathcal{O}} (1)\, ,
\end{equation}
as $n\to\infty$.
\end{lemma}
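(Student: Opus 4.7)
The plan is to evaluate $M_n(1)$, $M_n'(1)$, $M_n''(1)$ through the closed-form representation of Lemma \ref{lem2.3} at the point $w=\omega$ corresponding to $s=1$ under the parametrization $s=1/(2(\cosh w-1))$. Writing $F(w):=M_n(s(w))$, the probability generating function identity $M_n(s)/M_n(1)=E[s^{S_n}]$ gives $\mu_n=M_n'(1)/M_n(1)$ and $\sigma_n^2=M_n''(1)/M_n(1)+\mu_n-\mu_n^2$. Using $\cosh\omega=3/2$ and $\sinh\omega=\sqrt{5}/2$, a direct calculation yields $s'(\omega)=-\sqrt{5}$ and $s''(\omega)=7$, and the chain rule then gives
\[
\mu_n=-\frac{F'(\omega)}{\sqrt{5}\,F(\omega)}, \qquad \sigma_n^2=\frac{F''(\omega)}{5\,F(\omega)}-\frac{2\mu_n}{5}-\mu_n^2.
\]

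Next I would write $F(w)=G(w)J_0(w)$ with $G(w):=(\cosh w-1)/\sinh w$ and $J_k(w):=\sum_{m\in\Z}I_{n+k,n}(w+2\pi im)$. By Lemma \ref{lem2.4} and termwise differentiation (justified by the uniform bounds of Lemma \ref{lem2.5}) one has $J_0'=-J_1$, $J_0''=J_2$, so
\[
F'(\omega)=G'(\omega)J_0(\omega)-G(\omega)J_1(\omega), \quad F''(\omega)=G''(\omega)J_0(\omega)-2G'(\omega)J_1(\omega)+G(\omega)J_2(\omega).
\]
Lemma \ref{lem2.5} collapses each $J_k(\omega)$ to $I_{n+k,n}(\omega)(1+\mathcal{O}(q^n))$ for some $q\in(0,1)$, and Lemma \ref{lem2.6} supplies the two-term expansion $I_{n+k,n}(\omega)=\frac{n!(n+k)!}{\sqrt{\pi n}}(2/\omega)^{2n+k+1}[\cosh(\omega/2)+b_k(\omega)/n+\mathcal{O}(1/n^2)]$.

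What remains is algebraic simplification at $w=\omega$. From $\cosh(\omega/2)=\sqrt{5}/2$, $\sinh(\omega/2)=1/2$, $\tanh(\omega/2)=1/\sqrt{5}$ one obtains the key identities $G'(\omega)/G(\omega)=2/\sqrt{5}$ and $(b_\nu(\omega)-b_0(\omega))/\cosh(\omega/2)=-\omega\nu/(4\sqrt{5})-\nu(\nu+1)/4$ for $\nu=1,2$. Assembling these ingredients, the ratio $J_1(\omega)/J_0(\omega)$ simplifies to $(2n+1)/\omega-1/(2\sqrt{5})+\mathcal{O}(1/n)$, hence $F'(\omega)/F(\omega)=\sqrt{5}/2-(2n+1)/\omega+\mathcal{O}(1/n)$, which produces $\mu_n=(2n+1)/(\sqrt{5}\omega)-1/2+\mathcal{O}(1/n)=a_n+\mathcal{O}(1/n)$.

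The main obstacle lies in the variance: the leading $4n^2/(5\omega^2)$ contributions in $F''(\omega)/(5F(\omega))$ and in $\mu_n^2$ are equal and must cancel exactly, so one must propagate the $1/n$ correction $\Delta_2=-\omega/(2\sqrt{5})-3/2$ from Lemma \ref{lem2.6} through the ratio $J_2(\omega)/J_0(\omega)$ even though the final remainder is only $\mathcal{O}(1)$. Once the cancellation is confirmed, the surviving $\mathcal{O}(n)$ terms combine to give $\sigma_n^2=2n/(5\omega^2)-4n/(5\sqrt{5}\omega)+\mathcal{O}(1)=(4n/(5\omega^2))(1/2-\omega/\sqrt{5})+\mathcal{O}(1)=b_n+\mathcal{O}(1)$, completing the proof.
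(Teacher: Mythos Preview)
Your proposal is correct and follows essentially the same route as the paper: both exploit the representation of Lemma~\ref{lem2.3} at $w=\omega$, differentiate (using Lemma~\ref{lem2.4}), collapse the series via Lemma~\ref{lem2.5}, and feed in the two-term asymptotics of Lemma~\ref{lem2.6} to extract $\mu_n$ and $\sigma_n^2$. The only cosmetic difference is bookkeeping---the paper differentiates $f(w)M_n(g(w))=\sum_m I_{n,n}(w+2\pi im)$ and solves the resulting linear system \eqref{eq:4.11}--\eqref{eq:4.13} for $M_n'(1)/M_n(1)$ and $M_n''(1)/M_n(1)$, whereas you apply the chain rule directly to $F(w)=M_n(s(w))=G(w)J_0(w)$; the two are algebraically equivalent and lead to the same expressions.
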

\begin{proof}
We use the well-known formulae
\begin{equation} \label{eq:4.5}
\mu_n = \frac{M_n' (1)}{M_n (1)}\, , \qquad
\sigma_n^2 = \frac{M_n'' (1)}{M_n (1)} + \frac{M_n'(1)}{M_n (1)} - \left( \frac{M_n' (1)}{M_n (1)}\right)^2
\end{equation}
from probability \cite{25}. Further we apply Lemmata \ref{lem2.3} - \ref{lem2.6} with $w = z = \omega$ giving asymptotic approximations for the derivatives in \eqref{eq:4.5}. More precisely, regarding Lemma \ref{lem2.3}, we put
\begin{equation} \label{eq:4.6}
f(w) := \frac{\sinh w}{\cosh w - 1} = \textrm{cotanh} \frac{w}{2}, \qquad
g(w) := \frac{1}{2(\cosh w - 1)} = \frac{1}{4 \sinh^2 \frac{w}{2}}
\end{equation}
for $w > 0$ and observe that
\begin{equation} \label{eq:4.7}
f'(w) = - 2g(w), \qquad g'(w) = -g(w) f(w).
\end{equation}
Now, \eqref{eq:2.14} can be written as
\begin{equation} \label{eq:4.8}
f(w) M_n \big( g(w)\big) = \sum_{m=-\infty}^\infty I_{n,n} (w + 2\pi im), \qquad w > 0.
\end{equation}
Differentiation, Lemma \ref{lem2.4} and the use of \eqref{eq:4.7} give
\begin{align}
\label{eq:4.9} 2g(w) M_n \big( g(w)\big)
& + g(w) f(w)^2 M_n' \big( g(w)\big) = \sum_{m=-\infty}^\infty I_{n+1,n} (w + 2\pi im), \\
2g(w) f(w) M_n \big( g(w)\big)
& + \Big\{ 6g(w)^2 f(w) + g(w) f(w)^3\Big\} \, M_n' \big( g(w)\big) \nonumber \\
\label{eq:4.10} & + g(w)^2 f(w)^3 M_n'' \big( g(w)\big) = \sum_{m=-\infty}^\infty I_{n+2,n} (w + 2\pi im)\, .
\end{align}
Since $f(w) = \sqrt{1+4g(w)}$ and $g(\omega) = 1,$ $f(\omega) = \sqrt{5},$ \eqref{eq:4.8} - \eqref{eq:4.11}, for $w = \omega$, imply
\begin{align}
\label{eq:4.11} \sqrt{5} M_n (1)
   & = \sum_{m=-\infty}^\infty I_{n,n} (\omega + 2\pi im), \\
\label{eq:4.12} 2 M_n (1) + 5 M_n' (1)
   & = \sum_{m=-\infty}^\infty I_{n+1,1} (\omega + 2\pi im), \\
\label{eq:4.13} 2\sqrt{5} M_n (1) + 11\sqrt{5} M_n' (1) + 5\sqrt{5} M_n'' (1)
   & = \sum_{m=-\infty}^\infty I_{n+2,n} (\omega + 2\pi im),
\end{align}
and further, by Lemma \ref{lem2.5},
\begin{align}
\label{eq:4.14}
   \frac{M_n' (1)}{M_n (1)} 
& = \frac{I_{n+1,n} (\omega)}{\sqrt{5} I_{n,n} (\omega)} \, 
   \Big( 1 + {\mathcal{O}} (q^n)\Big) - \frac{2}{5}\, , \\*[0.3cm]
\label{eq:4.15}
   \frac{M_n'' (1)}{M_n (1)} 
&= \frac{I_{n+2,2}(\omega)}{\sqrt{5} I_{n,n} (\omega)} \,
   \Big( 1 + {\mathcal{O}} (q^n)\Big) - \frac{11}{5} \, \frac{M_n' (1)}{M_n (1)} - \frac{2}{5}\, .
\end{align}
Now, tedious but straightforward and elementary calculations establish \eqref{eq:4.4}
\end{proof}
For later reference we note the following consequence of \eqref{eq:4.11}, \eqref{eq:2.18} and \eqref{eq:2.19} (also use
$\cosh \, \frac{\omega}{2} = \frac{\sqrt{5}}{2}$ and Stirling's formula)
\begin{align} 
M_n (1)
& = \frac{1}{\sqrt{5}}\, I_{n,n} (\omega) \big( 1 + {\mathcal{O}}(q^n)\big)  \nonumber \\*[0.3cm]
& = \frac{(n!)^2}{\sqrt{5\pi n}} \left(\frac{2}{\omega}\right)^{2n+1}\, \cosh \, \frac{\omega}{2} \,
    \left( 1 +{\mathcal{O}} \left( \frac{1}{n}\right)\right) \nonumber \\*[0.3cm]
\label{eq:4.16} & = \frac{(2n)!}{\omega^{2n+1}} \, \left( 1 + {\mathcal{O}} \, \left( \frac{1}{n}\right)\right)\, .
\end{align}
Now, our main result is stated in
\begin{theorem} \label{theo4.2}
Suppose that the sequences $(a_n), (b_n)$, and the number $\omega$ are given by \eqref{eq:4.3} and \eqref{eq:4.2} respectively, then we have
\begin{equation} \label{eq:4.18}
\frac{\sqrt{b_n} (2j)! \omega^{2n+1}}{(2n)!} \, {n\brace j}_1 =
\frac{1}{\sqrt{2\pi}}\, e^{-x^2/2} + o(1)\, ,
\end{equation}
as $n\to\infty$, uniformly in $j\in\Z$, where $x = (j-a_n)/\sqrt{b_n}$.
\end{theorem}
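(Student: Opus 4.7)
The plan is to apply Lemma~\ref{lem3.1} to the probability distribution
\[
p(n,j)=\frac{(2j)!\,{n\brace j}_1}{M_n(1)},\qquad j\in\Z,
\]
with $A_n=M_n$. This is legitimate because Theorem~\ref{theo2.8} guarantees that $M_n$ has only real, non-positive roots, so $M_n(s)/M_n(1)$ factors as in \eqref{eq:3.2} with probabilities $p_{n\nu}\in[0,1]$, realising $p(n,\cdot)$ as the distribution of a sum $S_n$ of independent Bernoulli variables. The standing hypothesis \eqref{eq:3.3} follows at once from Lemma~\ref{lem4.1}: since $\sigma_n^2=b_n+\mathcal{O}(1)$ and $b_n$ is a positive multiple of $n$, one has $\liminf_n\sigma_n^2/n>0$.

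With this verified, Lemma~\ref{lem3.1} applied with $k=2$ yields
\[
\sigma_n\,p(n,j)=\frac{1}{\sqrt{2\pi}}\,e^{-x_n^2/2}+o(1),\qquad x_n=\frac{j-\mu_n}{\sigma_n},
\]
uniformly in $j\in\Z$ as $n\to\infty$. The next step is to convert this intrinsic probabilistic normalisation into the elementary one appearing in \eqref{eq:4.18}. From Lemma~\ref{lem4.1} one gets $\sqrt{b_n}/\sigma_n=1+\mathcal{O}(1/n)$, and from \eqref{eq:4.16} one has $M_n(1)=(2n)!\,\omega^{-(2n+1)}\bigl(1+\mathcal{O}(1/n)\bigr)$. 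Multiplying these factors gives
\[
\frac{\sqrt{b_n}\,(2j)!\,\omega^{2n+1}}{(2n)!}\,{n\brace j}_1=\sigma_n\,p(n,j)\cdot\bigl(1+\mathcal{O}(1/n)\bigr).
\]
Because the left-hand side of the LCLT is uniformly bounded in $j$, the multiplicative error collapses into a uniform $o(1)$.

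The only genuine technicality is replacing the Gaussian $e^{-x_n^2/2}$ by $e^{-x^2/2}$ with $x=(j-a_n)/\sqrt{b_n}$, uniformly in $j$. Writing $j=a_n+x\sqrt{b_n}$ and using $\mu_n=a_n+\mathcal{O}(1/n)$ together with $\sigma_n^{-1}=b_n^{-1/2}\bigl(1+\mathcal{O}(1/n)\bigr)$, one finds
\[
x_n-x=x\cdot\mathcal{O}(1/n)+\mathcal{O}(n^{-3/2}).
\]
Since $p(n,j)=0$ outside $0\le j\le n$, only the range $|x|\lesssim\sqrt{n}$ is relevant, so no exponentially large errors enter. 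I would handle the uniform comparison by a two-regime split: for $|x|\le n^{1/4}$ one checks directly that $|x_n^2-x^2|=o(1)$, hence the two exponentials differ by $o(1)$; for $|x|>n^{1/4}$ one has $|x_n|\ge|x|/2$ for $n$ sufficiently large, so both $e^{-x_n^2/2}$ and $e^{-x^2/2}$ are already $o(1)$. I expect this regime split to be the main, if routine, obstacle; no further analytic input is needed beyond Theorem~\ref{theo2.8}, Lemma~\ref{lem4.1}, \eqref{eq:4.16}, and Lemma~\ref{lem3.1}.
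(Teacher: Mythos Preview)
Your proposal is correct and follows essentially the same route as the paper: apply Lemma~\ref{lem3.1} with $k=2$ to $p(n,j)$ (legitimised by Theorem~\ref{theo2.8} and condition~\eqref{eq:3.3} via Lemma~\ref{lem4.1}), then pass from $(\mu_n,\sigma_n)$ to $(a_n,\sqrt{b_n})$ and from $M_n(1)$ to $(2n)!/\omega^{2n+1}$ using Lemma~\ref{lem4.1} and \eqref{eq:4.16}. The paper dismisses the replacement of $x_n$ by $x$ as ``elementary calculus''; your two-regime split is exactly the kind of routine argument intended there.
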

\begin{proof}
According to our preparations we apply Lemma \ref{lem3.1} with $k = 2$ to the probabilities in \eqref{eq:4.1}. This is permitted, since the central condition \eqref{eq:3.3} is satisfied in view of Lemma \ref{lem4.1}. Hence, we get
\begin{equation} \label{eq:4.19}
\frac{\sigma_n (2j)!}{M_n (1)} \, {n\brace j}_1 =
\frac{1}{\sqrt{2\pi}}\, e^{-y^2/2} + o(1)\, ,
\end{equation}
as $n\to\infty$, uniformly in $j\in\Z$, where $y = (j-\mu_n)/\sigma_n$ and $\mu_n, \sigma_n$ are taken from Lemma 
\ref{lem4.1}. Now, elementary calculus shows that in \eqref{eq:4.19} we may replace $y$ by $x$, given in \eqref{eq:4.18}, and finally the approximation \eqref{eq:4.16} leads to the main formula \eqref{eq:4.18}.
\end{proof}
At this stage some comments on Theorem \ref{theo4.2} are in order. We repeat that the error term in \eqref{eq:4.18} holds uniformly with respect to $j\in\Z$. However, the most valuable information is provided for $j$'s such that 
$x = (j-a_n)/\sqrt{b_n}$ is bounded. A similar statement holds for the general local central limit theorem in Lemma \ref{lem3.1} as well. If $j$ depends on $n$ such that $|x|$ tends to infinity sufficiently fast, then the right hand side is of order $o(1)$ only. According to these remarks we may expect a good approximation of ${n\brace j}_1$ by the quantity
\[
A(n,j) := \frac{(2n)!}{\sqrt{2\pi b_n} (2j)! \omega^{2n+1}}\, e^{-x^2/2},
\]
if $j$ is close to $a_n \sim 2n/\sqrt{5} \omega$. For illustration we choose $n = 1000$ and $j = 930$ which implies the relative comparison given by
\[
\frac{{n\brace j}_1}{A(n,j)} = 1.043849\ldots .
\]
We also mention that on the basis of the general Lemma \ref{lem3.1} we could improve the error term in \eqref{eq:4.18} which requires more terms for the expansion \eqref{eq:2.19}. However, here we do not perform the necessary calculations.

An immediate consequence of Theorem \ref{theo4.2} is the asymptotic normality of the numbers
$(2j)! {n\brace j}_1$. We omit detailed explanations, since they use routine arguments regarding approximations of integrals by means of Riemann sums.
\begin{theorem} \label{theo4.3}
Suppose that the sequences $(a_n), (b_n)$ and the number $\omega$ are given by \eqref{eq:4.3} and \eqref{eq:4.2} respectively, then for all $y\in\R$ we have
\begin{equation} \label{4.20}
\lim_{n\to\infty} \, \frac{\omega^{2n+1}}{(2n)!} \, \sum_{j\leq a_n + y\sqrt{b_n}} (2j)! 
{n\brace j}_1 =
\frac{1}{\sqrt{2\pi}} \int\limits_{-\infty}^y e^{-t^2/2} dt.
\end{equation}
\end{theorem}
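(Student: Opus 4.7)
The goal is to deduce the integrated statement \eqref{4.20} from the pointwise local limit law in Theorem \ref{theo4.2}. First I would use \eqref{eq:4.16} to rewrite
\[
\frac{\omega^{2n+1}}{(2n)!}\sum_{j\leq a_n+y\sqrt{b_n}}(2j)!{n\brace j}_1 = \bigl(1+O(1/n)\bigr)\sum_{j\leq a_n+y\sqrt{b_n}}p(n,j),
\]
so that \eqref{4.20} reduces to the classical central limit statement $P(S_n\leq a_n+y\sqrt{b_n})\to\Phi(y):=\frac{1}{\sqrt{2\pi}}\int_{-\infty}^{y}e^{-t^2/2}\,dt$ for the distribution of $S_n$ from Section \ref{auxi}.

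From Theorem \ref{theo4.2} I read off the uniform representation
\[
p(n,j)=\frac{1}{\sqrt{b_n}}\left(\frac{1}{\sqrt{2\pi}}e^{-x_j^2/2}+\eta_n(j)\right),\qquad x_j:=\frac{j-a_n}{\sqrt{b_n}},
\]
where $\varepsilon_n:=\sup_{j\in\Z}|\eta_n(j)|\to 0$. Summing over the indices with $-M\leq x_j\leq y$ for any fixed $M>0$, the principal term is a Riemann sum of mesh $1/\sqrt{b_n}$ for the continuous integrable function $\frac{1}{\sqrt{2\pi}}e^{-t^2/2}$ on $[-M,y]$, and since $b_n\to\infty$ it converges to $\int_{-M}^{y}\frac{1}{\sqrt{2\pi}}e^{-t^2/2}\,dt$. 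The number of summands in the central range is $O(M\sqrt{b_n})$, so the error contribution there is $O(M)\,\varepsilon_n = o(1)$ for fixed $M$, which gives $\sum_{-M\leq x_j\leq y}p(n,j)\to\int_{-M}^{y}\frac{1}{\sqrt{2\pi}}e^{-t^2/2}\,dt$ as $n\to\infty$.

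The main obstacle is the far left tail $x_j<-M$, for which crudely bounding $\eta_n(j)$ would produce an error of order $O(\sqrt{b_n}\,\varepsilon_n)$ that need not vanish. I would handle this probabilistically: since Lemma \ref{lem4.1} yields $\sigma_n^2\sim b_n$ and $\mu_n=a_n+O(1/n)$, Chebyshev's inequality applied to $S_n$ gives $\sum_{x_j<-M}p(n,j)\leq(1+o(1))/M^2$ for each fixed $M$, mirroring the Gaussian tail bound $\int_{-\infty}^{-M}\frac{1}{\sqrt{2\pi}}e^{-t^2/2}\,dt\to 0$ as $M\to\infty$. Combining the two regimes and letting $n\to\infty$ first, then $M\to\infty$, yields \eqref{4.20}.
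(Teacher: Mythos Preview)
Your argument is correct and follows precisely the route the paper indicates: it derives \eqref{4.20} from Theorem~\ref{theo4.2} by the ``routine arguments regarding approximations of integrals by means of Riemann sums'' that the authors mention but omit, supplemented by a Chebyshev tail bound to control the region $x_j<-M$. The only slip is the cross-reference: $S_n$, $\mu_n$, $\sigma_n$ come from Section~3, not Section~\ref{auxi}.
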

The proofs for the asymptotics of the Legendre-Stirling numbers ${n\brace j}_1 $ in this paper and of the Chebyshev-Stirling numbers ${n\brace j}_{1/2}$ in \cite{15} to a large extent depend on the special representations  \eqref{eq:2.3}, \eqref{eq:2.4} and related analytic quantities. In the comments following Theorem \ref{theo2.9} above we briefly indicated why our approach cannot be applied to the Jacobi-Stirling numbers in general. However, for the special case $\gamma= 0$ we have
\begin{equation} \label{eq:4.21}
{n\brace j}_0 =
{n-1\brace j-1}_1,
\end{equation}
and, consequently, the results of this paper also hold for the numbers \({n\brace j}_0\). The identity \eqref{eq:4.21} either can be verified directly by using \eqref{eq:1.3} or by showing that both sequences in \eqref{eq:4.21} satisfy the same recurrence relation.

\section*{Acknowledgements} Thorsten Neuschel gratefully acknowledges support from KU Leuven research grant OT\slash12\slash073 and the Belgian Interuniversity Attraction Pole P07/18.






\end{document}